\newtheorem{theorem}{Theorem}[section]
\newtheorem{proposition}[theorem]{Proposition}
\newtheorem{lemma}[theorem]{Lemma}
\newtheorem{corollary}[theorem]{Corollary}
\theoremstyle{remark} \newtheorem{remark}[theorem]{Remark}
\theoremstyle{definition}
\newtheorem{defn}{Definition}[section]
\begin{document}
\title[Determinant bundles on moduli spaces]{Determinant   bundles and geometric quantization of vortex moduli spaces on compact ${\bf K \ddot{A}HLER}$  surfaces }

\author{ Saibal Ganguli*}

\address{*  Bhaskaracharya Pratisthan.,\\
56/14 Erandavane,Damle path\\
Pune 411004, India.\\
email: saibalgan@gmail.com}

\begin{abstract}
In this paper we first show that on projective manifolds $(M, \omega)$, there are holomorphic determinant bundles (in the sense of Knusden-Mumford used by  Bismut, Gillet, Soul$\acute{\rm{e}}$) which play the role of 
the geometric quantum bundle, namely one for each  input data of a Hermitian  holomorphic line bundle $L$ of non-trivial Chern class on  a compact K$\ddot{\rm{a}}$hler manifold $Z$ (with Todd genus non-zero)  and a choice of a geometric quantization of $(M, \omega)$.  Next we further study the generalization of the vortex equations on 
K$\ddot{ \rm{a}}$hler $4$-manifold which has been studied earlier   by Bradlow. We show that when the K$\ddot{ \rm{a}}$hler $4$-manifold  avoids some obstructions then the regular part of the moduli space is a  K$\ddot{ \rm{a}}$hler manifold  and admit a 
  pull back of a Quillen determinant  bundle as the  quantum line bundle, i.e. the curvature is proportional to the K$\ddot{ \rm{a}}$hler  form. Thus they can be
  quantized geometrically. In fact we show that  the moduli space of the usual vortex equations on a projective K$\ddot{\rm{a}}$hler $4$-manifold is 
projective when the moduli space is smooth. Since in
K$\ddot{\rm{a}}$hler $4$-manifold the vortex moduli and the Seiberg Witten moduli coincide our effort gives a quantization of Seiberg Witten moduli by determinant bundles.
\end{abstract}

\maketitle

\vskip 5mm

Mathematics  Subject Classification (MSC2010) :  mathematical physics ( 81V99), differential geometry (58A32).

\section{{\bf Introduction}}

Given a symplectic manifold $(M, \omega)$ with $\omega $ integral (i.e.  its cohomology class is in the image of $H^2(M, {\mathbb Z}) \rightarrow H^2(M, {\mathbb R})$), 
geometric prequantization is the construction of a Hermitian line bundle with a connection (called the prequantum line bundle) whose curvature $\rho$ is proportional to the 
symplectic form $\omega$. This is always possible as long as $\omega$ is integral. This method of quantization, developed by Kostant and Souriau,  assigns to 
functions $f \in C^{\infty}(M)$, an operator, $\hat{f} =-i \nabla_{X_f} +   f$ acting on the Hilbert space of square 
integrable sections of $L$ (the wave functions). Here $\nabla = d - i \theta$ where 
locally $\omega = d \theta$ and $X_f$ is defined by $\omega(X_f, \cdot) = - df(\cdot)$. We have taken $ \hbar=1$.  The general reference for this is Woodhouse ~\cite{W}.  

This assignment has the property that  that the Poisson bracket (induced by  the symplectic form), namely, 
$\{ f_1, f_2 \}_{PB} $ corresponds  to an operator proportional to the commutator $[\hat{f}_1, \hat{f}_2]$ for any two functions $f_1 , f_2$. 

The Hilbert space of prequantization is usually too huge for most purposes. 
Geometric quantization involves  construction of a polarization of the symplectic manifold such that we now take polarized sections of the line bundle, yielding a finite 
dimensional Hilbert space in most cases. However, $\hat{f}$ does not 
map the polarized Hilbert space to the polarized Hilbert space  in general. Thus only a few observables from the set of all  $f \in C^{\infty}(M)$ are quantizable.

When $M$ is a compact  K\"{a}hler manifold with  $\omega$  an integral  K\"{a}hler form and ${\mathcal L}$ the prequantum line bundle,   one can take as the Hilbert space of 
quantization the space of holomorphic sections of ${\mathcal L}^{\otimes \mu}$ for $\mu \in {\mathbb Z}$ large
enough. See ~\cite{W} for example for an explanation. We call such a  ${\mathcal L}^{\otimes \mu}$ a quantum bundle.

\vspace{.2in}

The determinant line  bundle was originally constructed by Knusden and Mumford and later on generalized by Quillen  as the determinant line bundle of a family of Cauchy Riemann operators on the space of connections on certain vector bundles on a compact Riemann surface~\cite{Q}. There are many generalizations, see  Bismut and Freed, ~\cite{BF} and in Bismut, Gillet, Soul$\acute{\rm{e}}$~\cite{BGS}. 

In the notation of Bismut, Gillet, Soul$\acute{\rm{e}}$~\cite{BGS}, one can take $B$ to be the moduli space of connections of a vector bundle on  a compact Riemann surface $\Sigma$, $Z$ to be $\Sigma$  and $M$ to be the trivial product $M = Z \times B$. Define the determinant of a family of Cauchy Riemann operators on $\Sigma$ parametrized by $B$, the space of connections. This yields a Quillen bundle in the  definition  according to ~\cite{Q}.

We will denote a bundle as a determinant bundle if  the bundle or its  rational power is a bundle in the sense of Quillen ~\cite{Q} or Bismut and  Freed ~\cite{BF} or Bismut, Gillet, Soul$\acute{\rm{e}}$  ~\cite{BGS}.


Let $(X, \omega)$ be a compact  integral K\"{a}hler manifold such that $L$ is a holomorphic positive  line bundle whose curvature is 
proportional to the K\"{a}hler form $\omega$.   Since for  $\mu > \mu_0 > 0$ for some $\mu_0$  the holomorphic bundle $L^{\otimes \mu}$ has enough holomorphic  sections for an embedding in projective space it can be considered as a quantum bundle.  The Hilbert space 
of quantization will  be the space of holomorphic sections
of $L^{\otimes \mu}$. It should be noted that since  $X$ is compact square integrability of holomorphic sections follows automatically.  

An interesting question is whether  this bundle $L$ or $L^{\otimes \mu}$ can be thought of  as a determinant bundle. This is the main thrust of this paper.

In ~\cite{DM}, Dey and Mathai had shown that one can realize a certain tensor product of $L$ as a Quillen bundle, i.e. determinant  of a family of Cauchy Riemann 
 operators on ${\mathbb C}P^1$, parametrized by $X$. Even when $(X, \omega)$ is just compact and symplectic, with integral symplectic form, Gromov embedding theorem enables us to embed $X$ into ${\mathbb C}P^N$ and they 
had shown in ~\cite{DM} that the quantum bundle is again of Quillen 
type, i.e. determinant  of a family of Cauchy Riemann operators on ${\mathbb C}P^1$, parametrized by $X$.

 In ~\cite{Bi}, Biswas proved an equivariant version of the results in ~\cite{DM} where the Cauchy Riemann operators can be on any compact Riemann surface (not necessarily ${\mathbb C}P^1$). 
 
In ~\cite{D1},~\cite{D2}, ~\cite{DP} Dey and in ~\cite{ER} Romao and Eriksson have constructed Quillen bundles on the moduli space of vortices on a Riemann surface.

In ~\cite{D3}, ~\cite{D4}  Dey has constructed Quillen bundles on  the moduli space of Higgs bundles for the three 
K\"{a}hler forms mentioned in the paper by Hitchin ~\cite{H} which constitute a hyperK\"{a}hler structure.

Donaldson and Kronheimer have given an exposition of the Quillen construction over the moduli space of ASD connections on a Riemann surface, see for instance $(6.5.4)$ in ~\cite{DK}.

 Motivated by these examples, in this paper we have shown that on projective manifolds $M$, the geometric quantum bundle can be realized in many ways as a  Quillen bundle in the  
sense of Bismut, Gillet, Soul$\acute{\rm{e}}$, namely one for each  input data of a Hermitian holomorphic line bundle $L$ on  a compact K$\ddot{\rm{a}}$hler manifold $Z$  and a geometric quantization of $(M, \omega)$. 

 The vortex equations when 
 defined on a K$\ddot{\rm{a}}$hler surface $(X, \omega)$ yields a non-empty  moduli space under certain conditions, see  Bradlow ~\cite{B1}. In fact in this  case, the moduli space is compact,  as it can be  described as a Seiberg -Witten moduli space (see Appendix). and the regular part of the moduli space is K$\ddot{\rm{a}}$hler.  Denoting the  K$\ddot{\rm{a}}$hler form to be $\Omega$  and if
 the Poincar$\acute{\rm{e}}$ dual to $\omega$ (the  K$\ddot{\rm{a}}$hler  form of the  K$\ddot{\rm{a}}$hler surface)  has a representative which is a Riemann surface (one dimensional
complex submanifold), then there
is a pullback of a Quillen bundle on the regular part of the  moduli space whose curvature is proportional to $\Omega$ (provided
a mild obstruction is bypassed).
As an application we show that for projective $4$- manifolds, the vortex moduli space is projective ( if the moduli space  is smooth or regular).

 In Appendix  we recall that there is a connection with Seiberg-Witten moduli space and the vortex moduli space for compact K\"{a}hler $4$-manifolds, 
 as showed by Bradlow and Garcia-Prada~\cite{BG}.

\section{{\bf Quantum  line  bundle as a determinant bundle}}

We begin by mentioning a well known fact  namely, for $Z$ a Riemann surface and $M = {\mathcal A}$ , the space of unitary connections on a vector bundle $E$ on the 
Riemann surface, the the three  constructions, namely the  one of Bismut and Freed ~\cite{BF} and Bismut, Gillet and Soul$\acute{\rm{e}}$, ~\cite{BGS}
and the one of Quillen ~\cite{Q}, matches. The equivalence of the first case and third case has been in disussed in the introduction of ~\cite{BF} and proved in details from section $f$ to section $k$. The equivalence of  all three  has also been dicussed in the introduction of \cite{BGS} and the proof is discussed in an series of related papers. The interested reader may look into these as it is beyond our scope of our paper to describe them fully.    

 In Quillen's case, the family of $\bar{\partial}$ operators on a Riemann surface $Z$ is parametrized by ${\mathcal A}$ where ${\mathcal A}$ is the space of unitary connections  of a vector bundle $E$ on $Z$. The $\bar{\partial}_A$ acts on sections of the vector bundle $E$ for $A \in {\mathcal A}$. 

 We have  a following:

 {\bf Nomenclature:}
For purposes of this paper, we shall use the term    determinant line bundle if the line bundle in question is  an rational power of a Quillen bundle or a bundle in the sense of Knusden-Mumford as in  Bismut and Freed ~\cite{BF} and Bismut, Gillet and Soul$\acute{\rm{e}}$, ~\cite{BGS}. We will still use the term  Quiilen bundle if the bundle is as in ~\cite{Q} with a modified Quillen metric.

We recall 
 
 \begin{defn}
 
 A  holomorphic line bundle  equipped with a connection and Hermitian metric on a compact  K$\ddot{\rm{a}}$hler  manifold with integral K$\ddot{\rm{a}}$hler form is called quantum  
 line bundle if the curvature of the line bundle (w.r.t. the connection) is proportional to
  the  K$\ddot{\rm{a}}$hler  form with the appropriate proportionality constant ( which can be positive or negative ). In this case the Hilbert space consists of square integrable 
  polarized sections with polarization
  by holomorphic tangent space for  positive constant or antiholomorphic for negative constant.
 \end{defn}.

 We have the following theorem:
 
 \begin{theorem} 
 If $M$ is  a compact K$\ddot{\rm{a}}$hler manifold with integral K$\ddot{\rm{a}}$hler form. Then the quantum bundle, or its non-zero power, positive or negative, 
 is holomorphically equivalent to  a determinant line  bundle ( modulo tensoring with a flat bundle), thus making the determinant bundle also a quantum  line bundle . There are 
 choices i.e. one for each choice of 
 a     compact K$\ddot{\rm{a}}$hler manifold $Z$ with non-zero Todd genus and having a holomorphic Hermitian bundle $L$ with nontrivial Chern class. (We may have to replace $L$ with $L^{\otimes n}$ for some $n$). 
 \end{theorem}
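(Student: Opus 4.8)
The plan is to realize the quantum bundle as the Knudsen--Mumford determinant of cohomology of a family of $\bar\partial$-operators on $Z$ parametrized holomorphically by $M$, and to read off its curvature from the local family index theorem of Bismut, Gillet and Soul\'e \cite{BGS}. Write $\mathcal{L}$ for the quantum bundle on $M$, so that its Chern form is a fixed nonzero multiple of $\omega$. Form the trivial holomorphic fibration $\pi : Z \times M \to M$, whose fibre is $Z$, and let $p,q$ denote the projections to $Z$ and $M$. On the total space I put the external tensor product $\xi = p^{*}L^{\otimes n}\otimes q^{*}\mathcal{L}$, equipped with the product of the given Hermitian metrics; the integer $n$ is fixed below. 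Restricting $\xi$ to the fibres gives a holomorphic family of Cauchy--Riemann operators on $Z$ coupled to $L^{\otimes n}$, and I take $\lambda=\det R\pi_{*}\xi$, the determinant line bundle of this family, endowed with the Quillen metric coming from the product K\"ahler metric on $Z\times M$ and the metric on $\xi$. By construction $\lambda$ is a holomorphic line bundle on $M$ of the required determinant type.

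Next I would compute the curvature. The Bismut--Gillet--Soul\'e form of the family index theorem identifies, up to the standard sign convention, the first Chern form of $\lambda$ for the Quillen metric with the degree-two component of the fibre integral
\[
 c_{1}(\lambda,\|\cdot\|_{Q}) \;=\; \Big[\,\int_{Z}\mathrm{Td}(TZ)\,\mathrm{ch}(\xi)\,\Big]_{(2)} .
\]
Because $\xi$ is an external product and the vertical tangent bundle is $p^{*}TZ$, the integrand factorizes as $p^{*}\!\big(\mathrm{Td}(TZ)\,e^{c_{1}(L^{\otimes n})}\big)\wedge q^{*}e^{c_{1}(\mathcal{L})}$, and integration over the fibre selects the top-degree part on $Z$ against the degree-two part on $M$, so only the linear term in $q^{*}c_{1}(\mathcal{L})$ survives and I obtain
\[
 c_{1}(\lambda,\|\cdot\|_{Q}) \;=\; \chi(Z,L^{\otimes n})\, c_{1}(\mathcal{L}), \qquad \chi(Z,L^{\otimes n})=\int_{Z}\mathrm{Td}(TZ)\,\mathrm{ch}(L^{\otimes n}).
\]
Thus the curvature of $\lambda$ is \emph{exactly} a constant multiple of that of $\mathcal{L}$, hence proportional to $\omega$, which is what makes $\lambda$ itself a quantum line bundle. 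This is the step I expect to be the main obstacle: one needs the identity above as an honest differential-form statement rather than a merely cohomological one, which forces the use of the Quillen metric and the full strength of the local index theorem, together with a check that the product K\"ahler data make $Z\times M\to M$ a K\"ahler fibration in the sense of \cite{BGS} so that the theorem applies and the fibre integral factorizes cleanly.

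It remains to make the multiplicity nonzero and to upgrade to a holomorphic identification. By Hirzebruch--Riemann--Roch, $\chi(Z,L^{\otimes n})$ is a polynomial in $n$ whose constant term is the Todd genus $\chi(Z,\mathcal{O}_{Z})=\int_{Z}\mathrm{Td}(TZ)$; this is exactly where the hypothesis of nonzero Todd genus enters, since it guarantees the polynomial is not identically zero, so $\chi(Z,L^{\otimes n})\neq 0$ for all but finitely many $n$. I fix such an $n$, which is the replacement of $L$ by $L^{\otimes n}$ in the statement, and set $N=\chi(Z,L^{\otimes n})\in\ZZ\setminus\{0\}$. Then $\lambda$ and $\mathcal{L}^{\otimes N}$ have equal first Chern class, so their difference lies in $\mathrm{Pic}^{0}(M)$; on a compact K\"ahler manifold Hodge theory identifies $\mathrm{Pic}^{0}(M)$ with $H^{1}(M,U(1))$, i.e.\ with flat unitary line bundles, whence $\lambda\cong\mathcal{L}^{\otimes N}\otimes F$ holomorphically for a flat bundle $F$. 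Since $N$ may be positive or negative, this is precisely the asserted holomorphic equivalence of the quantum bundle, or of a positive or negative power of it, with a determinant bundle modulo a flat factor, and each admissible pair $(Z,L)$ yields such a realization.

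For the symplectic case there is no complex structure on $M$ supporting a holomorphic family, so I would transport the K\"ahler statement along an embedding, as in the symplectic argument of \cite{DM}. By the Gromov embedding theorem $(X,\omega)$ with $\omega$ integral embeds symplectically as $i:X\hookrightarrow {\mathbb C}P^{N}$ with $i^{*}\omega_{FS}$ a multiple of $\omega$, and the quantum bundle is $i^{*}\mathcal{O}(k)$ for suitable $k$. Applying the already-established K\"ahler case to the projective manifold ${\mathbb C}P^{N}$ realizes $\mathcal{O}(k)$, or a power, holomorphically as a determinant bundle; pulling the corresponding family of operators back by $i$ gives a family on $Z$ parametrized by $X$ whose determinant line bundle is a smooth line bundle with $c_{1}=N\,i^{*}c_{1}(\mathcal{O}(k))$, a multiple of $c_{1}(\mathcal{L})$. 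As $X$ carries no compatible complex structure, only the underlying topological bundle survives, which yields the topological equivalence of the quantum bundle with a determinant bundle modulo a flat bundle.
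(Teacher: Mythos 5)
Your proposal is correct and follows essentially the same route as the paper's own proof: the external tensor product $p^{*}L^{\otimes n}\otimes q^{*}\mathcal{L}$ on $Z\times M$, the Bismut--Gillet--Soul\'e curvature formula with fibre integration over $Z$ isolating the term linear in $c_{1}(\mathcal{L})$, and the nonzero Todd genus ensuring that the proportionality constant, as a polynomial in $n$, is not identically zero. Your write-up is in fact slightly sharper where the paper is vague: you identify the constant as the integer $\chi(Z,L^{\otimes n})$ (the paper only calls it rational), you make explicit the $\mathrm{Pic}^{0}$/Hodge-theoretic step behind ``differ by a flat bundle,'' and you supply the Gromov-embedding argument for the symplectic case, which the paper's proof leaves to a citation of \cite{DM} in the introduction.
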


\begin{proof}
Let ${\mathcal L}$ be a quantum   line bundle on $M$.

Let $Z$ be a compact  K$\ddot{\rm{a}}$hler manifold  with Todd genus $ \neq 0 $ with a holomorphic line bundle $ L$ on it  with non-trivial Chern class.

Let  $Y= Z \times M$.  Let ${\mathcal L}$ be a quantum
line bundle on $M$ and $L$ a holomorphic line bundle on $Z$. Then  define $\zeta={p_1}^{*}(L) \otimes {p_2}^{*}({\mathcal L})$. Let $Q$ be the determinant line bundle  according to Bismut, Gillet, Soul$\acute{\rm{e}}$ ~\cite{BGS} corresponding to
$\zeta$. 

According to Bismut, Gillet and Soul$\acute{\rm{e}}$, the    curvature  $Q_{\zeta}$ of 
 $Q$ is given  by the degree two term of $2i\pi\int_Z Td(\frac{-R ^{Z}}{2i\pi}) Tr [exp(\frac{\Omega_{\zeta}}{2i\pi})]$
where $R^{Z}$ is the curvature of $T_Z^{0,1}$ and $\Omega_{\zeta}$  is the  curvature of the holomorphic Hermitian connection on $\zeta$. 

Note that 
 $\Omega_{\zeta} = {\mathcal K}_{{\mathcal L}} + {\mathcal K}_{L}$ where ${\mathcal K}_{{\mathcal L}}$ and
${\mathcal K}_{L}$ are the curvatures of ${\mathcal L}$ and $L$. In our case $R^{Z}$ and ${\mathcal K}_{L}$ are absorbed in the integral  leaving only an   constant multiple of 
 ${\mathcal K}_{{\mathcal L}}$ as the degree two part of the above expression. Thus   curvature $Q_{\zeta}$ is thus proportional to   ${\mathcal K}_{{\mathcal L}}$
 with a 
 rational proportionality constant.  Suppose the constant is strictly positive. In this case the determinant bundle is  a positive rational  tensor power of ${\mathcal L}$ 
 . 
 If the constant is strictly negative, the determinant  bundle is  a positive rational tensor power of   ${\mathcal L}^{-1}$modulo tensoring with a flat bundle.

 This proportionality constant may be zero. Then $Q_{\zeta}$ cannot be the curvature  to a quantum bundle or its tensor product since symplectic forms on compact manifolds have non trivial cohomology. In this case since  the Chern class
 of our line bundle $L$   on $Z$ is  non trivial and $Z$ has non-zero Todd genus, replacing $L$ with   $ \tilde{L} = L^{\otimes n}$  for an appropriate   $n$ in the above expression, we will get a non-zero constant. This can be seen as follows. The constant of proportionailty  looks like $\int_Z ( Td )  + n d_1 + n^2  d_2 + ... $ (a terminating polynomial) where $d_i$ are constants. Here $Td$ denotes $ Td(\frac{-R ^{Z}}{2i\pi}) \neq 0$.  We get a non-identically zero polynomial in $n$.  Since a polynomial has finite numer of zeros there will be some $n$ for which the constant  is non zero.

 This implies that the determinant  bundle and rational powers (positive or negative) of the quantum ample bundle differ by  atmost by a holomorphic bundle which is flat . So we can quantize by  a determinant line bundle. Since the determinant
 bundle and the quantum bundle we started with and its rational powers may differ by flat bundles they may not be holomorphically equivalent, so  we may hope to get new quantization.
 
\end{proof}

 The above theorem says that  under the conditions above, we can quantize $M$ by the determinant  bundle.

A similar result for  the abelian  vortex moduli space on a Riemann surface, Dey,~\cite{D1},~\cite{D2}.

  One can define the vortex equations on  compact K$\ddot{\rm{a}}$hler $4$-manifolds. The moduli space was studied by Bradlow ~\cite{B1}.  We geometrically quantize the  moduli space   where the equations are  
  now defined on a compact   K$\ddot{\rm{a}}$hler surfaces.

\section{ {\bf Vortex equations on a Riemann surface, Quillen bundle  and curvature}}

We review some material from ~\cite{D1}, ~\cite{D2}, ~\cite{DP}.

On  a Riemann surface $M$ the abelian vortex equation are given by
\begin{equation}\label{vor1}
\bar{\partial}_{A} \phi= 0,
\end{equation}

\begin{equation}\label{vor}
F_A = \frac{1}{2}(\tau  - {\lvert \phi \rvert }^{2})\omega.
\end{equation}

 where $\phi$ is defined as a smooth section  of an Hermitian holomorphic line bundle $L$ and $A$ is the unitary connection of the principal bundle $P$ associated to $L$. 
The form  $\omega$ is an imaginary valued   K$\ddot{\rm{a}}$hler form on $M$ and $\tau$ a real constant.

\subsection {The symplectic form }
Let $\mathcal{A}$ be the space of all unitary connections on P the associated principal bundle of the vortex bundle $L$ and $\Gamma(M, L)$ be sections of L. 
We define the configuration space as $\mathcal{C} = \mathcal{A} \times \Gamma(M, L)$. The space  $\mathcal{C}$ is an infinite dimensioanal affine space with differentail structure determined by tangent spaces whose tangent vectors are of the form $(\alpha,\phi)$ where $\alpha$ is
a $\mathfrak{u}(1)$ valued one form and $\phi$ a section.
Let  $p = (A, \Psi) \in \mathcal{C}, X = (\alpha_1 ,\beta), Y = (\alpha_2 ,\eta ) \in  T_p \mathcal{C}$. We define the following $L^2$-metric on $\mathcal{C}$.

\begin{equation}
\mathcal{G}(X,Y)= \int_M *\alpha_1 \wedge \alpha_2  +i\int_M (\frac{\beta \bar{\eta} + \bar{\beta} \eta}{2}) \omega
\end{equation}
and an almost complex structure $I=(*,i)$  on $T_p \mathcal{C}$ where $*dz_1=-i dz_1$ and $*d\bar{z_1}=i d\bar{z_1}$.

We define 

\begin{equation}
\Omega(X,Y)= -\int_M \alpha_1 \wedge \alpha_2  -\frac{1}{2}\int_M (\beta {\bar{\eta}} -{\bar{\beta}} \eta) \omega
\end{equation}
such that $\mathcal{G}(I X, Y ) = \Omega(X, Y )$.

Let $\zeta \in Maps(M, \mathfrak{u}(1))$ be  an element of the  Lie algebra of the gauge group. Note that ${\bar{\zeta}} =-\zeta$. It generates a vector field $X_{\zeta}$ on $\mathcal{C}$ as follows:
\begin{equation}
X_{\zeta} (A, \phi) = (d \zeta, -\zeta \phi) \in T_p\mathcal{C},
\end{equation}
where $p = (A, \phi) \in  \mathcal{C}$.
We show next that $X_{\zeta}$ is Hamiltonian. Namely, define $H_{\zeta} : \mathcal{C} \rightarrow \mathbb{C}$ as follows:
\begin{equation}
H_{\zeta} (p) =\int_M \zeta.(F_A -\frac{1}{2}(\tau -{\lvert{\phi} \rvert}^{2})\omega)
\end{equation}

Then for $X = (\alpha, \beta) \in T_p{ \mathcal C})$,

\begin{equation}
dH_{\zeta}(X)= -\int_M d\zeta\wedge \alpha - \frac{1}{2} \int_M {\bar{\beta}}(-\zeta)\phi -{\beta} (\zeta) {\bar{\phi}}) \omega
\end{equation}
 \begin{equation}
  dH_{\zeta}(X)= \Omega (X_{\zeta},X)  
 \end{equation}

Thus we can define the moment map $\mu : {\mathcal C} \rightarrow  \Omega^{2}(M, \mathfrak{u}(n)) = {\mathfrak{g}}^{*}$ (the dual of the Lie
algebra of the gauge group) to be
\begin{equation}
\mu(A, \phi) = F_A - \frac{1}{2}(\tau - {\lvert \phi \rvert}^{2} )\omega
\end{equation}

The moduli space of the vortex equations \eqref{vor1} and \eqref{vor} is defined as the quotient of the space
of solutions  by the gauge grooup. It inherts its topology
by the quotient topology and the differential structure
by  taking quotient of the 
space of tangent vectors of the solution space by the gauge group.
By K$\ddot{\rm{a}}$hler reduction(see 2.3,\cite{ER}) this form  descends to the moduli space, giving it a K$\ddot{\rm{a}}$hler structure. In fact it is related to the Manton-Nasir form ~\cite{MN}.

  {\bf Note:} In this section we had taken $\omega = h^2 d z \wedge d \bar{z}$,  an imaginary valued  symplectic form on the Riemann surface which is also K$\ddot{\rm{a}}$hler. If instead we take it to be the symplectic form on the real tangent space, namely,  $\tilde{\omega} = i \omega$, then the 
 second  vortex equation looks like:
  
  \begin{equation}\label{vor2}
F_A = \frac{i }{2}( |\phi|^2- \tau  )\tilde{\omega}.
\end{equation}

This is the form in which the equation been written in  ~\cite{B1}.

\subsection{The modified Quillen metric and curvature}\label{QMC}

In ~\cite{MN}, it was discussed that the Manton-Nasir-Samols form on the vortex moduli space is integral when the   Riemann surface has volume an integral
multiple of $ \frac{4\pi}{\tau}$ (see ~\ref{mn_form}).

Let ${\rm det} (\bar{\partial})$ denote the Quillen bundle defined on ${\mathcal A}$ as in ~\cite{Q}. 
Let $pr: {\mathcal C} = {\mathcal A} \times \Gamma(L) \rightarrow {\mathcal A}$.
We denote the Quillen bundle ${\mathcal P} = pr^*({\rm det} (\bar{\partial}))$ which is well defined on 
${\mathcal C}={\mathcal A} \times \Gamma (L)$ which is an affine space.
We can  equip  ${\mathcal P}$ a modified Quillen metric, namely, we multiply the 
Quillen metric ~\cite{Q} by the factor  $e^{- \frac{i}{4 \pi} \int_{M} | \Psi|^2_H \omega} $.

 The Quillen metric contributes $-\frac{i}{2\pi} \left( \int_{\Sigma} \alpha_1 \wedge \alpha_2\right)$ to the curvature 
 ~\cite{Q}, 
and the factor $e^{- \frac{i}{4 \pi} \int_{M} | \Psi|^2_H \omega}$ contributes
$\frac{i}{2\pi} \left(- \frac{1}{2}\int_{\Sigma}( \beta  \bar{\eta} - \bar{\beta}  \eta ) \omega\right)$  to the curvature. 

Thus we have the following:

The curvature of ${\mathcal P}$ with the modified Quillen metric is indeed  $\frac{i}{ 2 \pi} \Omega$ on the affine space ${\mathcal C}$.

$\Omega$ descends to the moduli space as a K$\ddot{\rm{a}}$hler form by K$\ddot{\rm{a}}$hler reduction. 


If  descent of $\Omega$ is integral on th moduli the 
Quillen bundle ${\mathcal P}$ on the configuation space  descends to the moduli space. This follows from considerations in ~\cite{W} (chapter on prequantization)  and ~\cite{D1}, ~\cite{D2}, ~\cite{DP} etc.

Let $\Omega_{MN}$ be the Manton-Nasir form as defined in ~\cite{MN}.  Then $\Omega_{Samols} =  \Omega_{MN}$.  (see ~\cite{MN} equaton 2.16, arxiv version and discussion aboove the equation). 

It can be shown that $[\frac{\Omega}{2 \pi}] = [\Omega_{Samols}] =\Omega_{MN} $, see for instance ~\cite{DM}.

 The  condition that the bundle descends (i.e. $\Omega$ is integral on the moduli space) is that the Riemann surface has volume $A$ an integral multiple of $\frac{4 \pi}{\tau}$.   This is because 

\begin{equation}\label{mn_form}
[\frac{\Omega}{2\pi}]=  [\Omega_{MN}] = [(\frac{\tau A}{2}-2\pi N)\eta +2\pi(\sigma_1 +\ldots+ \sigma_n)]
\end{equation}
 where $A$ is the volume of the Riemann surface $\Sigma$ and $\sigma_i's$ and $\eta$ are integral cohomological classes of the moduli space (defined in~\cite{MN}).

\section{{\bf Generalizations of above theory to the moduli space of vortices on a K$\ddot{\rm{a}}$hler surface }}
 For K$\ddot{\rm{a}}$hler sufaces $X$(i.e.  compact K$\ddot{\rm{a}}$hler 4 real dimensional manifolds)  vortex equations are as follows ~\cite{B1}.
 
 \begin{eqnarray}\label{kvor}
 \bar{\partial}_A \phi  &=& 0 \\
\Lambda F_{A} &=& \frac{i}{2} ({\lvert {\phi} \rvert}^2 -\tau)\\
 {F_A}^{0,2} &=& 0
 \end{eqnarray}
where $\Lambda F_{A}$ is the contraction of $F_A$ with a suitable  K$\ddot{\rm{a}}$hler form $\omega$ (i.e. the symplectic form on the real tangent space)
   where $F_A$ is the curvature of a line bundle $L$  on $X$ with connection $A$, $\phi$ is a section.
 
Let  the configuration space be  ${\mathcal C}= \mathcal{A} \times \Gamma(L)$, where $\mathcal{A}$ is the affine space of unitary  
connections on $L$ and $\Gamma(L)$ is the space of sections of $L$. The space $\mathcal{C}$ is an infinite dimensional  affine space and its tangent space is similar
to the that discussed in the Riemann surface case discussed in previous section.

 \subsection{The moduli space as a  K$\ddot{\rm{a}}$hler  manifold}

 In the first part of this section we closely follow Riera ~\cite{R}.
 
 Let $\mathcal{A}$ be the space of $U(1)$-connections on $P$ the associated bundle of a line bundle $L$. This is an affine space modelled on
$\Omega^{1}(P \times_{Ad} \mathfrak{u}(1))$. We define a complex structure $I_{ \mathcal{A }}$ on A as follows. Given any $A \in \mathcal{A}$ , the
tangent space $T_A \mathcal{A}$ can be canonically identified with $\Omega^{1} (P \times_{Ad} \mathfrak{u}(1)) = \Omega^{0} (T^{*}(X) \otimes P \times_{Ad} \mathfrak{u}(1))$.
Then we set $I_\mathcal{A} = -I^{ *} \otimes 1$, where $I$ is the complex structure of the tangent bundle which it inherits  since the manifold is complex. The complex structure $I_\mathcal{A}$ is integrable. We also define on
$\mathcal{A}$ a symplectic form $\omega_{\mathcal{A}}$ . Let $\Lambda : \Omega^{p,q} (X) \rightarrow \Omega^{p-1,q-1} (X)$ be the adjoint of the map
given by wedging with $\omega$ which is also equal to the contraction with $\omega$ with respect to K$\ddot{\rm{a}}$hler metric . Then, if $A \in \mathcal{A}$ and 
$\alpha_1, \alpha_2 \in T_A \mathcal{A} = \Omega^{1} ( P \times_{Ad} \mathfrak{u}(1))$, we set
\begin{equation}\label{kah}
\omega_\mathcal{A} (\alpha_1, \alpha_2) = -\int_X \Lambda(B(\alpha_1,\alpha_2)) \frac{\omega \wedge \omega}{2} .
\end{equation}
Here $B : \Omega^{1} (P \times_{Ad} \mathfrak{u}(1)) \otimes \Omega^{1}(P \times_{Ad} \mathfrak{u}(1)) \rightarrow \Omega ^{2} (X)$ is the combination of the usual wedge
product with a bi-invariant nondegenerate pairing $< >$, on $\mathfrak{u}(1)$. Since  $\mathfrak{u}(1)$ is one dimensional  we can consider $B$ as wedge of imaginary one forms. 

It turns out that $ \omega_\mathcal{A}$ is
a symplectic form on $\mathcal{A}$ , and it is compatible with the complex structure $I_{\mathcal{A}}$ . Hence
$\mathcal{A}$ is a  K$\ddot{\rm{a}}$hler manifold. 
 Let $X _1= (\alpha_1, \beta) $ and $X_2 = (\alpha_2, \eta) $ are in $ T_{(A, \phi)} {\mathcal C}$.

On ${\mathcal C}$ we define 
\begin{equation}
\Omega_X(X_1, X_2) =- \int_X \Lambda(B(\alpha_1,\alpha_2))\frac{\omega \wedge \omega}{2} +\frac{i}{2} \int_X (\beta  \bar{\eta} - \bar{\beta} \eta)\frac{\omega \wedge \omega}{2}  .
\end{equation}
where $\omega$ is now a  K$\ddot{\rm{a}}$hler form(a symplectic form  compatible with complex structure and   K$\ddot{\rm{a}}$hler metric).  The form $\Omega_X$ is the K$\ddot{\rm{a}}$hler  form with respect to the following K$\ddot{\rm{a}}$hler metric.
\begin{equation}
g(X_1,X_2)= \int_M *\alpha_1 \wedge \alpha_2 \wedge \omega  +\int_M (\frac{\beta \bar{\eta} + \bar{\beta} \eta}{2}) \frac{\omega \wedge \omega}{2}
\end{equation}
where $*$ defined in preceding section.
There exists a moment map for the action of $\mathcal{G}_{U(1)}$ on $\mathcal{A}$ , which takes the following
form (see for example \cite{DK}, \cite{Ko}):

$\mu : A \rightarrow Lie{\mathcal{G}^*_{U(1)}}$

 $A \rightarrow \Lambda(F_A)$ .

 Here $F_A$ denotes the curvature of $A$. It lies in $\Omega^{2}(P \times_{Ad} \mathfrak{u}(1))$, so $\Lambda(F_A) \in \Omega^{0}(P \times_{Ad}  \mathfrak{u}(1)) ) \subset
{\Omega}^{0} (P \times_{Ad} \mathfrak{u}(1)))^{*}$, the last inclusion being given by the integral on $X$ of the pairing $<>$,  on $\mathfrak{u}(1)$.



Let $\mu(A, \phi ) = \Lambda F_{A}- \frac{i}{2} ({\lvert {\phi} \rvert}^2- \tau) $

 Following the same arguments from previous section we have
  $\mu = \mu(A, \phi) $ above   is a moment map for the action of the gauge group on ${\mathcal C}$.
 The moduli space inherits the quotient topology by the gauge group action and the differential structure is simmilar to the one discussed in section 3 in the Riemann surface case.

 Since the moduli  space is same as the  Seiberg-Witten 
moduli by Appendix,
     the regular part of the moduli space is  K$\ddot{\rm{a}}$hler with K$\ddot{\rm{a}}$hler form $\Omega_X$ by corollary 4.2 Becker~\cite{B} mainly by infitedimensiinal  K$\ddot{\rm{a}}$hler  reduction technique.
     
     The above argument works for K$\ddot{\rm{a}}$hler  surfaces for more general K$\ddot{\rm{a}}$hler  manifolds it has been shown in \cite{R}.

\subsection{Dterminant bundle construction on the moduli space}

The construction is similar to Donaldson's construction of the Quillen bundle on the moduli space of  ASD connections on a K$\ddot{\rm{a}}$hler surface~\cite{DK}, section $(6.5.4)$.

 Let $A \in {\mathcal A}$. Let us restrict the connection $A$ to the Poincar$\acute{\rm{e}}$ dual $S$ of the K$\ddot{\rm{a}}$ hler form $\omega$ on $X$. Let the 
 restricted connection be denoted by $A^{R}$ and the space of restricted connection be deonted by ${\mathcal A}^R$.  
  
 Let us consider the first term of the  symplectic form. For the K$\ddot{\rm{a}}$hler surface the  symplectic form $\Omega_{\mathcal{A}}$ is from \eqref{kah}
\begin{equation}
\Omega_{\mathcal{A}} (\alpha_1,\alpha_2) =  -\int_X (\Lambda(B(\alpha_1,\alpha_2)) \frac{\omega \wedge \omega}{2}.
\end{equation}
  $\Lambda$ is the contraction with respect to the K$\ddot{\rm{a}}$hler form $\omega$ and $B$ is wedging of $\alpha_1$ and $\alpha_2$. So we get 
\begin{equation}
\Omega_{\mathcal{A}} (\alpha_1,\alpha_2) = - \int_X \alpha_1 \wedge \alpha_2  \wedge \omega.
\end{equation}

 Recall  the closed K$\ddot{\rm{a}}$hler form $\omega$ belongs to a cohomology class $[\omega]$
 and thus
 there exists a homology class $D$ which is the Poincar$\acute{\rm{e}}$ dual of the cohomology class $[\omega]$. If we take submanifold representative of $D$, say $S$,
we have 
\begin{equation}
\int_X (\alpha_1 \wedge \alpha_2) \wedge \omega.= \int_S \alpha_1 \wedge \alpha_2 =-\Omega_R(\alpha_1, \alpha_2)
\end{equation}
for $\alpha_1 \wedge \alpha_2$ closed, where we denote $\Omega_R(\alpha_1, \alpha_2) = -\int_S \alpha_1 \wedge \alpha_2$

Let the Poincare dual of $\omega$ be $S$.
\begin{theorem}\label{dual}
 If $S$ is a Riemann surface i.e. it is a complex one dimensional submanifold of the K$\ddot{a}$hler surface $X$, then we  have a pullback of a determinant  line   
 bundle in the sense of Quillen
 with a metric on the vortex  configuration space.  Its curvature form is   cohomologus to the standard K$\ddot{\rm{a}}$hler
 form on the configuration space (with the appropriate proportionality factor)  and their  difference is given by the differential of a gauge invariant one form. Moreover if 
 this bundle  descends to the vortex moduli space,  its curvature with respect to a certain connection will be  proportional to the K$\ddot{\rm{a}}$hler form obtained
 by the moment map reduction of the K$\ddot{\rm{a}}$hler form to the regular part of the vortex  moduli space.
\end{theorem}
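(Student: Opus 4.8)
The plan is to transplant the Riemann-surface construction of Section 3 to the surface $X$ by \emph{restricting everything to the Poincar\'e-dual curve} $S$. First I would form the restriction map $r:\mathcal{A}\to\mathcal{A}^R$, $A\mapsto A|_S$, where $\mathcal{A}^R$ is the space of unitary connections on $L|_S$ over the Riemann surface $S$; since $S$ is a complex one-dimensional submanifold, $\bar{\partial}_A|_S=\bar{\partial}_{A^R}$ is a genuine Cauchy--Riemann operator on $L|_S$, so Quillen's determinant line bundle $\det(\bar{\partial})$ with its Quillen metric is defined on $\mathcal{A}^R$ exactly as in \cite{Q}. Composing with the projection $pr:\mathcal{C}=\mathcal{A}\times\Gamma(L)\to\mathcal{A}$ gives $\pi=r\circ pr:\mathcal{C}\to\mathcal{A}^R$, and I would set $\mathcal{P}=\pi^{*}\det(\bar{\partial})$. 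As in the surface case I would equip $\mathcal{P}$ with the \emph{modified} Quillen metric, multiplying Quillen's metric by the factor $e^{-\frac{i}{4\pi}\int_X|\Psi|^2_H\,\frac{\omega\wedge\omega}{2}}$, now using the volume form $\tfrac{\omega\wedge\omega}{2}$ of the $4$-manifold.

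Next I would compute the curvature of $\mathcal{P}$ on the affine space $\mathcal{C}$ as a sum of two pieces. By Quillen's theorem the curvature of $\det(\bar{\partial})$ on $\mathcal{A}^R$ is $-\frac{i}{2\pi}\int_S\alpha_1\wedge\alpha_2$, so its pullback under $\pi$ is $-\frac{i}{2\pi}\int_S \alpha_1|_S\wedge\alpha_2|_S$; a computation identical to the one in Section 3 shows the extra metric factor contributes the section term $\frac{i}{2\pi}\big(\tfrac{i}{2}\int_X(\beta\bar\eta-\bar\beta\eta)\tfrac{\omega\wedge\omega}{2}\big)$, which is exactly $\frac{i}{2\pi}$ times the second term of $\Omega_X$. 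Thus the curvature already equals $\frac{i}{2\pi}\Omega_X$ \emph{except} that its first term is an integral over $S$ rather than the $\omega$-weighted integral over $X$ appearing in $\Omega_{\mathcal{A}}$.

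The hard part is to reconcile these two first terms, and this is where the hypothesis that $S$ is a complex submanifold Poincar\'e-dual to $[\omega]$ enters. Poincar\'e duality at the level of currents gives a $1$-current $T$ on $X$ with $[S]-\omega=dT$, so for \emph{closed} $2$-forms the two integrals agree, while in general
\begin{equation}
\int_S\alpha_1|_S\wedge\alpha_2|_S-\int_X\alpha_1\wedge\alpha_2\wedge\omega=\langle T, d(\alpha_1\wedge\alpha_2)\rangle=\langle T, d\alpha_1\wedge\alpha_2-\alpha_1\wedge d\alpha_2\rangle .
\end{equation}
I would then exhibit a primitive on $\mathcal{C}$ for this defect by the $1$-form $\lambda_A(\alpha)=c\,\langle T, F_A\wedge\alpha\rangle$: since for an abelian connection the variation of the curvature is $\delta_{\alpha}F_A=d\alpha$, a direct computation on the affine space gives $d\lambda(\alpha_1,\alpha_2)=c\,\langle T, d\alpha_1\wedge\alpha_2-\alpha_1\wedge d\alpha_2\rangle$, matching the defect for the appropriate constant $c$ (absorbing signs from the current pairing). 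Because the gauge group is abelian, $F_A$ is gauge-invariant, hence so is $\lambda$; this is the crucial point, and verifying gauge-invariance, and that $\lambda$ is basic after restriction to $\mu^{-1}(0)$, is where I expect the real work to lie. It then follows that on $\mathcal{C}$ the curvature of $\mathcal{P}$ equals $\frac{i}{2\pi}\Omega_X+d\lambda$ with $\lambda$ gauge-invariant, which is the asserted cohomological statement.

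Finally, under the integrality/descent hypothesis the bundle $\mathcal{P}$ with its modified Quillen metric descends to the regular part of the vortex moduli space, as in the surface case (following \cite{W} and \cite{D1},\cite{D2},\cite{DP}), and the gauge-invariant $1$-form $\lambda$ descends likewise. Replacing the induced connection by the one obtained by subtracting $\lambda$ changes the curvature by $-d\lambda$ and therefore removes the exact term, leaving the curvature exactly proportional to $\frac{i}{2\pi}\Omega_X$. By Proposition~\ref{K}(b) this $\Omega_X$ is precisely the K\"ahler form produced by the moment-map reduction on the regular part of the moduli space, which is the final claim.
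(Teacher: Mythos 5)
Your proposal is correct and takes essentially the same route as the paper: restrict connections to $S$, pull back the Quillen determinant bundle, twist its metric by the section factor, and express the discrepancy between $-\int_S\alpha_1\wedge\alpha_2$ and $-\int_X\alpha_1\wedge\alpha_2\wedge\omega$ as the differential of a gauge-invariant one-form, which then allows descent to the moduli space. The only difference is presentational: where you construct the primitive explicitly via a current $T$ with $dT=[S]-\omega$ and $\lambda_A(\alpha)=c\,\langle T,F_A\wedge\alpha\rangle$, the paper simply cites section $(6.5.4)$ of Donaldson--Kronheimer for this identity ($\Omega_{\mathcal{A}}=i^{*}(\Omega_R)+d\Phi$ with $\Phi$ gauge invariant), and your computation is exactly the argument contained in that reference.
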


\begin{proof}
We assume $S$ is connected, though the proof goes through otherwise as well.
 We observe the restriction of  forms on $X$ will give a  map from     $\mathcal{A}$ to $\mathcal{A}^{R}$ the connection space of the
 Riemann surface
 $S$. We can pullback   the  Quillen bundle over $\mathcal{A}^{R}$  with curvature  proportional to
 $\Omega_R(\alpha_1,\alpha_1)=-\int_S \alpha_1 \wedge \alpha_2$ to a holomorphic bundle( since S is a Riemann surface by the above assumption). We claim that if the 
 pullback of this  bundle  descends on ${\mathcal A} / {\mathcal G}$ its curvature proportional to 
 \begin{equation}\label{modu}
 - \int_X ({\alpha}_1  \wedge {\alpha}_2 ) \wedge \omega= \Omega_{\mathcal{A}} ({\alpha}_1 ,{\alpha}_2 )
 \end{equation}
 
 The above statement follows verbatim from  section $(6.5.4)$, ~\cite{DK}.
 

 
 On ${\mathcal C} = {\mathcal A} \times \Gamma(L)$ we defined  the K$\ddot{\rm{a}}$hler
 form  which can now be written as 
 $    \Omega_X(X_1, X_2) =   -\int_X \alpha_1 \wedge \alpha_2 \wedge \omega + \frac{i}{2}\int_X (\beta  \bar{\eta} - \bar{\beta}  \eta) \frac{\omega \wedge \omega}{2}. $
 where $X_1 =(\alpha_1, \beta)$ and $X_2 = (\alpha_2, \eta)$ as before.
  By discussion in the previous subsection the regular part of the moduli space is K$\ddot{\rm{a}}$hler with K$\ddot{\rm{a}}$hler form the descendant  of the
above form $\Omega_X$.

 The    Quillen bundle is  the standard Quillen bundle on the Riemann surface, namely ${\rm det}(\bar{\partial}_{A^R})$ on the Riemann surface configuration space . 
 The pullback bundle induced by the restriction map with 
 metric ~\cite{Q} is modified with the factor $e^{\frac{i}{4 \pi} \int_X |\phi|^2 \frac{\omega \wedge  \omega}{2}}$ as in the Riemann surface case~\cite{D2}.

The conditions for descent of the line bundle will be discussed in the next subsection.
If this line bundle descends as in the Riemann surface case, the  cohomology of the curvature is proportional to the cohomology of the  descendant of  of $\Omega_X$ by \eqref{coho_1} since 
it can be checked as in~\cite{DK}
\begin{equation}\label{coho_1}
 \Omega_{\mathcal{A}} ({\alpha}_1 ,{\alpha}_2 )= i^{*}(\Omega_R)(\alpha_1,\alpha_2) +d(\Phi)(\alpha_1,\alpha_2)
\end{equation}
where notation is as in ~\cite{DK} and exactness of the second integrands of the two forms $\Omega_X$ and $2\pi i$ times curvature of the pullback of modified Quillen metric on the Riemann surface moduli.
From the above equation since $\Phi$ and $d(\Phi)$ is gauge invariant we get the statement above \eqref{modu}. (Here $i$ is the restriction map).

\end{proof}
\subsection{Integrality}
Let $X_1=(\alpha_1, \beta) $ and $X_2 =(\alpha_2, \eta)$ be tangent to the configuration space.
The form $\Omega_X$ given by
\begin{equation}
 \Omega_X(X_1,X_2)= -\int_X \alpha_1 \wedge \alpha_2  \wedge \omega  + \frac{i}{2}\int_X (\beta {\bar{\eta}} -{\bar{\beta}} \eta) \frac{\omega \wedge \omega}{2} 
\end{equation}
on the K$\ddot{\rm{a}}$hler surface  configuration and the solution subspaces. It  may not descend to an integral 
form on the moduli space. Same holds for corresponding form $\Omega_S$  for the moduli space in the the Riemann surface case.  Deriving their result from the Samol's metric, Manton  and Nasir gave 
a cohomological description of $\frac{1}{2\pi}$ times the form on  the vortex moduli space for a Riemann surface. Now since $\frac{\Omega_S}{2\pi} = \Omega_{Samols} = \Omega_{MN}$,from the discussion in subsection \ref{QMC},  
the cohomology 
class of $\frac{1}{2\pi}$ times  the form as in \cite{ER} is
\begin{equation}\label{mn_form1}
[\frac{\Omega_{S(A)}}{2\pi}]=[(\frac{\tau A}{2}-2\pi N)\eta +2\pi(\sigma_1 +\ldots+ \sigma_n)]
\end{equation}
where $A$ is the area of the Riemann surface $S$ and $\sigma_i's$ and $\eta$ are integral cohomological classes of the moduli space (defined in~\cite{MN}). From  
equation \ref{mn_form1} it 
 is clear
 $\frac{\Omega_{S(A)}}{4{\pi}^{2}}$ is integral if $\frac{\tau A}{4\pi}= n $  where $n$ is an integer. 
 
 Taking the form $\omega_1=k\omega$ where $k= n \frac{4\pi}{\tau A}$ we  get an integral K$\ddot{\rm{a}}$hler form 
  on the  moduli space for the Riemann surface.
 Using $\omega_1$ instead of $\omega$ we get an ample line bundle on moduli space of vortices on the Riemann surface $S$. 

We first define $\Psi$, a  holomorphic map between the vortex moduli space ${\mathcal M}_X$ for the  K$\ddot{\rm{a}}$hler surface $X$ and the vortex moduli  space ${\mathcal  M}_S$ 
for the  Riemann surface $S$ (with vortex equation with $\tau$ large enough such the moduli space is non-empty~\cite{B1}).
The pullback of the holomorphic line bundle by $\Psi$  will be a 
holomorphic line bundle on the moduli space of vortices for the K$\ddot{\rm{a}}$hler surface  and we will show the pullback  form defining the first Chern class is cohomologus 
to $-\frac{\Omega_X(\omega_1)}{4{\pi}^{2} }$ where $ \Omega_X(\omega_1)$ is the form $\Omega_X$ with $\omega$ replaced by $\omega_1$.

{\bf Obstructions O(1) and O(2)}:
Let the vortex line bundle and the Poincar$\acute{\rm{e}}$ dual $S$ to $\omega$ be such that a non-identically-zero holomorphic section does  not vanish entirely on $S$ or a component of
$S$ when $S$ is disconnected, i.e. 
the Chern class of the vortex line bundle is such that the Poincar$\acute{\rm{e}}$ dual does not contain a component of $S$.
If this condition is not satisfied it is called obstruction O(1).

When every representative of the Poincar$\acute{\rm{e}}$ dual to $\omega$ is not a complex submanifold of $X$, we call it  obstruction O(2).

\begin{lemma}\label{hol}
When obstructions $O_1$ and  $O(2)$ is not satisfied, there is a holomorphic map $\Psi$ between the vortex moduli space ${\mathcal M}_X$ for the K$\ddot{\rm{a}}$hler surface $X$ and the 
vortex moduli space ${\mathcal M}_S$ for  the Riemann surface $S$.
\end{lemma}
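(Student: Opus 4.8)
The plan is to construct $\Psi$ as the map induced by restricting the holomorphic vortex data from $X$ to the complex curve $S$, and to verify holomorphicity through the complex-geometric (divisor) description of the two moduli spaces rather than by restricting solutions of the PDE directly. First I would exploit the hypothesis that $O(2)$ fails: by definition this furnishes a representative $S$ of the Poincar\'e dual of $[\omega]$ which is a genuine one-dimensional complex submanifold of $X$. Fixing such an $S$, the inclusion $\iota : S \hookrightarrow X$ is holomorphic, so pullback of holomorphic objects along $\iota$ is itself a holomorphic operation, and this is what makes the construction live in the holomorphic category.

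Next I would recall Bradlow's correspondence~\cite{B1}. For $\tau$ in the admissible range, the integrability condition $F_A^{0,2}=0$ makes $(L,\bar{\partial}_A)$ a holomorphic line bundle, and $\bar{\partial}_A\phi=0$ makes $\phi$ a holomorphic section; a gauge class of solutions on $X$ is then determined, up to complexified gauge, by the effective divisor $D=\mathrm{div}(\phi)\subset X$ in the class $\mathrm{PD}(c_1(L))$. The identical statement on $S$ identifies ${\mathcal M}_S$ with the space of effective divisors on $S$ of degree $d_S=\int_S c_1(L)=c_1(L)\cdot[S]$, i.e. with a symmetric product $\mathrm{Sym}^{d_S}(S)$. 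Thus both moduli spaces acquire a complex-analytic description in which restriction is functorial.

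With these descriptions in hand I would define $\Psi$ on divisors by $\Psi(D)=D\cap S=\mathrm{div}_S(\iota^*\phi)$, equivalently as the map induced by the $\mathbb{C}$-linear restriction $H^0(X,L)\to H^0(S,\iota^* L)$, $\phi\mapsto \iota^*\phi$. Since $\iota^*\bar{\partial}_A=\bar{\partial}_{A|_S}$, the restricted section is again holomorphic, so the restricted pair determines a point of ${\mathcal M}_S$ through Bradlow's correspondence on $S$. Gauge invariance is automatic, because a $U(1)$ (or complexified) gauge transformation on $X$ restricts to one on $S$ while the divisor $D$ is a gauge invariant, so $\Psi$ descends to the quotients; the degree bookkeeping $\deg(D\cap S)=[D]\cdot[S]=c_1(L)\cdot\mathrm{PD}[\omega]=d_S$ confirms that $\Psi$ lands in ${\mathcal M}_S$. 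Holomorphicity then follows because the restriction map on sections is $\mathbb{C}$-linear and the induced map on the projectivized linear systems / symmetric products is a morphism of complex-analytic spaces; equivalently, intersecting a holomorphically varying family of divisors with the fixed complex curve $S$ depends holomorphically on the parameter.

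The hard part will be well-definedness rather than holomorphicity. One must ensure that $\iota^*\phi$ is not identically zero, i.e. that no component of $S$ lies in $D$, which is exactly the content of obstruction $O(1)$; and one must check that the restricted complex-gauge orbit lands in the stable locus so that the correspondence on $S$ applies and ${\mathcal M}_S$ is nonempty, whence the requirement that $\tau$ on $S$ be taken large enough. The conceptual point that must be handled carefully is that the analytic vortex equation on $X$ does \emph{not} restrict term by term to that on $S$: the contraction $\Lambda F_A$ computed with the four-dimensional K\"{a}hler form is not the curvature $F_{A|_S}$ of the restricted connection. This is precisely why the argument is carried out in the divisor / holomorphic-pair picture, where $\iota^*$ is functorial and the genuine $S$-vortex solution is recovered from the restricted data by re-invoking the Hitchin--Kobayashi correspondence on $S$, rather than by naively restricting solutions of the four-dimensional equations.
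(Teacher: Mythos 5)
Your proposal is correct and follows essentially the same route as the paper: both define $\Psi$ by restricting the holomorphic section/divisor data to the complex curve $S$, invoke Bradlow's correspondence (effective divisors, with existence/uniqueness of the vortex solution on $S$) to supply the connection part rather than restricting the connection itself, and deduce holomorphicity from the fact that restriction of holomorphic data along the holomorphic inclusion $S \hookrightarrow X$ determines the moduli point. Your explicit observation that the PDE does not restrict term by term (so the connection must be recovered via the correspondence on $S$) is exactly the subtlety the paper flags, and your remark that $O(1)$ is needed for well-definedness matches the paper's discussion immediately following the lemma.
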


\begin{proof}
 Here we again assume $S$ is connected.
  Let us define $\Psi:  {\mathcal M}_X \rightarrow {\mathcal M}_S $ by $\Psi([A, \phi)] = [A_{R}, \phi_R]$ 
 where $\phi_R$ denotes restriction of $\phi$ to $S$ and $A_R$ is the only connection (see \cite{B1} sections $3$ and $4$) which is a solution to the 
 vortex equation on the Riemann surface $S$ with section  $\phi_R$ and holomorphic structure the restriction of the holomorphic structure due to $A$. Since the first vortex equation of  $X$ restricts to the first vortex equation of $S$ as the later being a complex one dimensional submanifold of $X$ the restriction of section and holomorphic structure works out.
 
  Orbits map to orbits: If two elements $(A,\phi)$ and $(A_1, \phi_1)$ are related by gauge transformations  in the K$\ddot{\rm{a}}$hler surface configuration space then 
  the section part restrictions are related  by restriction of gauge transformations, and the connection parts will be related by the same gauge transforms by uniqueness and 
  since the equations are gauge invariant.
 Thus the map $\Psi$ is well-defined.
 
  Map between the moduli spaces:  From a close inspection of \cite{B1} one can see the moduli of solutions ${\mathcal M}_X$ is given by effective divisors. For an effective divisor in the  K$\ddot{\rm{a}}$hler surface $X$ we
 can take a holomorphic section and holomorphic  structure as a representative and  its restriction on the Riemann surface $S$ (which is a one dimensional complex submanifold)
 will define a divisor on the Riemann surface provided the restriction is not identically zero which is avoided since obstruction $O_1$ is not satisfied.
  This coincides with the map $\Psi$ between between moduli spaces which we defined in the first line. Though our connection part may not agree with the restriction of  a connection of 
  a  K$\ddot{\rm{a}}$hler surface moduli but the holomorphic structure do match because of the first vortex equation and $S$ being a complex one dimensional submanifold.

  Holomorphicity of the map $\Psi$: The map between the moduli is holomorphic as the Riemann surface $S$ is a complex one dimensional submanifold of $X$ and the section  part is just the restriction . Since  acting the section part of a tangent vector of the  space of solutions by the  complex structure of the section part changes the connection part of the vector by  its  complex structure ( since the moduli of solutions are complex analytic) the map is holomorphic.
  \end{proof}
 
 Since we chose $k= n \frac{4\pi}{\tau A}$ ( where $n$ an integer and $A$ area of $S$ induced by $\omega$), the negative of the K$\ddot{\rm{a}}$hler form 
 $\frac{\Omega_{S(A_{\omega_1})}}{4 {\pi}^{2} }$  on the 
 moduli space is 
 integral ( $A_{\omega_1}$ is the the area of $S$ with volume form $\omega_1 = k \omega$) and hence  its cohomology class will be a Chern class of a holomorphic 
 line bundle ${\mathcal L}$ and so its pullback by $\Psi$ to the K$\ddot{\rm{a}}$hler surface moduli space ${\mathcal M}_X$ will be a holomorphic line bundle $\Psi^{*}({\mathcal L})$.
 
 \begin{lemma}
   The  cohomology class of the form  $-\frac{\Omega_X(\omega_1)}{4{\pi}^{2} }$   is the Chern class of $\Psi^{*}({\mathcal L})$.Thus from this we get a quantization of vortex moduli by determinant bundle in the sense mentioned in Section 2
 \end{lemma}
 \begin{proof}
 The Chern class of the bundle $\Psi^{*}(L) $ is the cohomology class of the pull back of  $-\frac{\Omega_{S(A_{\omega_1})}}{4 {\pi}^{2} }$. The 
form in the configuration space level is given by
\begin{equation}\label{expo}
\frac{ \Omega_{S(A_{\omega_1})}}{4{\pi}^{2}}=\frac{-\int_S \alpha_1 \wedge \alpha_2    + i \int_S (\beta {\bar{\eta}} -{\bar{\beta}} \eta) \frac{\omega_1}{2}}{4{\pi}^{2}}  
\end{equation}
Though our map is  not exactly the restriction   the holomorphic structure part  of $A_R$ and $A^{R}$
  (the actual restriction of $A$   where $[(A,\phi)]$ is a solution in $X$) are same($\bar{\partial}_{A_R}=\bar{\partial}_{A^{R}}$). The reason behind this is that the Riemann surface  $S$ is a one dimensional complex submanifold of $X$ and so the first vortex equation of $X$ restricts to the first vortex equation of $S$. Now since the fibre of the   Quillen determinant bundle as defined in \cite{Q} and the metric depends  on the holomorphic structure(the delbar part)    the pullback of $\Psi$ and  descent of the pull back bundle  in moduli level described in \ref{dual}    yield holomorphically  equivalent isometric bundles. Thus we get a determinant bundle since pullback of a determinant bundle is a determinant bundle( see introduction \cite{BGS}) with
 the required curvature from below.

  The following argument has been made in theorem \ref{dual} but we repeat for the reader's convenience. From     \eqref{coho_1} for the form $\omega_1$ we get the first integrand of the numerator of the right hand side of \eqref{expo} is cohomologus to  first integrand of $\Omega_X(\omega_1)$ by the correspondence in the above paragraph since $\Psi$ and the restriction map pullbacks produce isometric bundles. The 
  second   term
   of \eqref{expo} is $\frac{\int_S (i\beta {\bar{\eta}} - i{\bar{\beta}} \eta) \frac{\omega_1}{2}}{4{\pi}^{2}}$ is the differential of the  form  
   $\Psi_{\phi}(\eta)= -\frac{i\int_S \eta {\bar{\phi}}\omega_1}{4{\pi}^{2}}$ .
   Similarly it can be shown the form representing the second term of the  $\frac{\Omega_X(\omega_1)}{4{\pi}^{2} }$ is exact. Both cases the one forms are gauge invariant.
   So the difference of the two forms $ \Psi^{*}(\frac{\Omega_{S(A_{\omega_1})}}{4{\pi}^{2} })$ and$\frac{\Omega_X(\omega_1)}{4{\pi}^{2} }$  descend  to exact forms
making them cohomologus. Since moduli for the form 
$\omega_1 $ and the form $\omega$ is biholomorphic and the pull back of the K$\ddot{\rm{a}}$hler form of $\omega_1$ moduli is cohomologus to th k times a K$\ddot{\rm{a}}$hler  form in the $\omega$ moduli   by arguments as above, we get a quantization of the original  $\omega$ moduli by a determinant bundle. (the bundle ${\mathcal{L}}^{-1}$ is a Quillen bundle as $-$ times its curvature is the integral descent of the Quillen curvature  and since positive or negative rational  powers of  Quileen bundle are determeinant bundles by our convention (see section 2) and since determinant bundles are closed under the pullbacks our claim follws). 
\end{proof}

\subsection{Projectivity of moduli space}

  
     The above theory can be generalized to $S$ having  more than one connected components. In that case the  obstruction to getting a holomorphic bundle on the regular part 
     the whole 
     moduli space is existence of sections 
     whose zero sets contain a component of $S$.
  
    The moduli  space ${\mathcal M}_X$ of vortex equations   on a K$\ddot{\rm{a}}$hler surface $X$  is   compact since it is equivalent to the Seiberg Witten moduli. 
      
On the other hand, we proved that  the regular moduli  space ${\mathcal M}_X$   has a K$\ddot{\rm{a}}$hler form $\Omega_X$, which is integral under the 
the condition   that   obstructions O(1) and O(2) can be avoided.

Under this condition we have shown, 
there is a determinat line bundle whose curvature is proportional to $\Omega_X$.  
 Thus from results proved in previous subsection we have a 
quantization by a determinant bundle
of the vortex K$\ddot{\rm{a}}$hler surface moduli space. 
 
 Below we give a large class of manifolds for which the obstructions can be avoided. We mention the following proposition  which follows also from Bradlow's work~\cite{B1}. The 
 main result in Bradlow's paper  implies that the moduli space can be interpreted as a Hilbert scheme of hypersurfaces in the base (K$\ddot{\rm{a}}$hler, projective) manifold of a fixed degree (the degree of the line bundle). That such a Hilbert scheme is projective is a well-known fact (Grothendieck's EGA or FGA). This observation is due to N. Romao. 
 
 There may be other proofs  of the following proposition probably one due to  J.M.Baptista whose reference we are unable to provide.
 
 We
 mention this proposition as it follows easily  from the results of this section without going into the theory of Hilbert schemes. It also provides an example of the ample or the quantum bundle being  a determinant bundle and it may differ from the one shown in \cite{DM} as in our case the input bundle is on a K$\ddot{\rm{a}}$hler surface while there's was
 a bundle on $CP^{1}$.

\begin{proposition}
Let $X$ be a projective K$\ddot{\rm{a}}$hler surface  with integral K$\ddot{\rm{a}}$hler form $ \omega$. Then the  moduli space ${\mathcal M}$ of vortex equations on $X$
  is projective  if the moduli space is smooth.
 \end{proposition}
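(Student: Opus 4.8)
The plan is to combine the K\"{a}hler geometry established earlier in this section with a standard positivity criterion for projectivity. By Proposition~\ref{K}(b), the regular (here smooth) moduli space ${\mathcal M}$ carries a K\"{a}hler form $\Omega_X$, obtained by K\"{a}hler reduction of the form on the configuration space. The first task is to certify that ${\mathcal M}$ is a compact complex manifold: compactness follows from the identification with the Seiberg--Witten moduli space recalled in the Appendix (and remark~\ref{note}), and the complex structure is inherited from the complex structure $I_{\mathcal A}$ on the configuration space, which descends under the K\"{a}hler quotient. Thus ${\mathcal M}$ is a compact K\"{a}hler manifold.

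Next I would produce a positive holomorphic line bundle on ${\mathcal M}$. The natural candidate is the pullback bundle $\Psi^{*}({\mathcal L})$ constructed in the previous subsection, whose existence requires that the obstructions O(1) and O(2) be absent. Here is where projectivity of $X$ does the essential work: since $X$ is projective with integral K\"{a}hler form $\omega$, one may choose the Poincar\'{e} dual $S$ of (a suitable multiple of) $\omega$ to be a smooth very ample hyperplane-section curve, which is a complex one-dimensional submanifold --- this is exactly the content needed to defeat O(2). The freedom to replace $\omega$ by $\omega_1 = k\omega$ and to move $S$ within its very ample linear system also lets one arrange that no holomorphic section of the vortex line bundle vanishes identically on $S$ (or on a component of $S$), defeating O(1); equivalently, one picks $S$ generic so that its class does not contain the Poincar\'{e} dual of the vortex bundle. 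With both obstructions bypassed, Lemma~\ref{hol} supplies the holomorphic map $\Psi$ and the preceding lemma identifies the cohomology class of $\frac{\Omega_X(\omega_1)}{4\pi^2 k}$ with the Chern class of $\Psi^{*}({\mathcal L})$.

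The final step is to invoke the Kodaira embedding theorem. The class $[\Omega_X]$ is integral (after the rescaling by $\omega_1$, as shown in the integrality subsection) and, being a K\"{a}hler class, is positive; hence $\Psi^{*}({\mathcal L})$, or an appropriate positive tensor power, is a positive holomorphic line bundle on the compact complex manifold ${\mathcal M}$. By Kodaira, a compact complex manifold admitting a positive line bundle is projective, so ${\mathcal M}$ is projective. One subtlety to record is that $\Psi$ need not be an embedding, so I do not claim $\Psi^{*}({\mathcal L})$ is itself very ample a priori; what is used is only that its curvature represents a positive integral class on ${\mathcal M}$, which is all Kodaira requires.

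I expect the main obstacle to be the simultaneous avoidance of O(1) and O(2) while retaining integrality. The delicate point is that the choice of $S$ (smooth, very ample) needed for O(2), the genericity needed for O(1), and the arithmetic condition $\frac{\tau A}{4\pi} = n \in \ZZ$ needed for integrality must all be met together; in the projective setting the abundance of very ample divisors and the freedom to rescale $\omega \mapsto k\omega$ make this possible, but verifying that a single $S$ can be chosen meeting all three demands is the crux. The smoothness hypothesis on ${\mathcal M}$ is what lets us speak of a K\"{a}hler manifold and apply Kodaira without worrying about singularities of the quotient.
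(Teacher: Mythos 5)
Your overall route is the paper's: compactness (remark~\ref{note}) plus the smoothness hypothesis, defeat of O(2) by choosing $S$ to be a smooth curve in a very ample linear system, the positive line bundle $\Psi^{*}(\mathcal{L})$ supplied by Lemma~\ref{hol} and the lemma following it, and Kodaira embedding at the end. The genuine gap is in your treatment of O(1). You propose to ``move $S$ within its very ample linear system'' and ``pick $S$ generic,'' and you yourself flag this as the unresolved crux. Genericity, as stated, does not close the argument: O(1) is quantified over the \emph{entire} moduli space --- no effective divisor $D$ in the class Poincar\'{e} dual to $c_1(L)$, i.e.\ no point of $\mathcal{M}_X$, may contain $S$ (or a component of $S$), since otherwise $\Psi$ fails to be defined at that point. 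To make genericity work you would need a dimension count showing that the locus of members of $|\mathcal{L}^{p}|$ contained in some divisor parametrized by the compact, finite-dimensional moduli space projects to a proper subvariety of the linear system; you do not supply this, and for small $p$ it can fail.

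The paper fills this gap with a degree comparison, which is the missing idea. If a nonzero holomorphic section of the vortex bundle $L$ vanishes identically on a smooth irreducible divisor $D$, then $\deg(L) - \deg(D) \geq 0$, degrees being taken against the polarization, because the full zero divisor of the section is effective and contains $D$. Choosing $p$ so large that $\deg(L) - \deg(\mathcal{L}^{p}) < 0$ and taking $S$ to be a smooth irreducible member of $|\mathcal{L}^{p}|$ (Bertini, using very ampleness) makes it impossible for \emph{any} nonzero section of $L$, at \emph{any} point of the moduli space, to vanish identically on $S$; this defeats O(1) uniformly, with no genericity needed, and simultaneously defeats O(2). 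One then runs the construction for the K\"{a}hler form $p\omega$ and observes, as the paper does, that the moduli spaces for $(X,\omega)$ and $(X,p\omega)$ are biholomorphic; this same observation absorbs your worry about reconciling the rescaling $\omega \mapsto k\omega$ demanded by integrality with the choice of $S$. With that repair, the rest of your argument (compact K\"{a}hler smooth moduli, integral positive class, Kodaira) coincides with the paper's.
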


\begin{proof}
 Since the moduli space  is smooth and compact, we have to show that obstructions $O_1$ and $O_2$ are avoided in order to get an ample line bundle. Since the manifold is projective we can have a very ample bundle
 whose zero of a generic  section will make us avoid obstruction $O_2$. To avoid  obstruction $O_1$, suppose a bundle $L$  has an holomorphic section which is zero on divisor $D$ and if the divisor
 is smooth and irreducible.
 \begin{equation}
 deg(L)-deg(D) \geq 0
 \end{equation}  
      Let $\mathcal{L}$ be the ample bundle then ${\mathcal{L}}^{p}$ for large $p$ be a very ample bundle then by increasing $p$ we can have
  \begin{equation}
  deg(L)- deg({\mathcal{L}}^{p}) <0
  \end{equation}
 So the obstruction $O_1$ can be avoided for the vortex moduli space for $X$ with K$\ddot{\rm{a}}$hler  form  $p\omega$ if there is smooth and irreducible divisor of ${\mathcal{L}}^{p}$. But this
  is guaranteed by Bertini's theorem  since the bundle is very ample. For the vortex  moduli space corresponding to   $X$ with  K$\ddot{\rm{a}}$hler form $\omega$  this holds too (as the corresponding K$\ddot{\rm{a}}$hler  manifolds $(X, \omega)$ and $(X, p \omega)$ are biholomorphic and the pull back of the K$\ddot{\rm{a}}$hler form of $p \omega$ moduli is cohomologus to th $p$ times a K$\ddot{\rm{a}}$hler $\ddot{\rm{a}}$hler form in the $\omega$ moduli).

\end{proof}

\begin{corollary}
The vortex moduli space on the     K$\ddot{\rm{a}}$hler surface   is projective  if the moduli space  is smooth   and there exists a closed surface surface  S  which is a 
complex one dimensional 
submanifold whose homology class is the Poincar$\acute{\rm{e}}$ dual of the cohomology class of the  K$\ddot{\rm{a}}$hler form and none of the representatives of  the 
Poincar$\acute{\rm{e}}$ dual of the Chern class of the vortex line bundle  contain a component of S.
\end{corollary}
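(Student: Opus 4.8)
The plan is to recognize the two geometric hypotheses of the corollary as precisely the statements that the obstructions $O(2)$ and $O(1)$ are avoided, and then to run the machinery of the previous subsections to produce a positive line bundle on $\mathcal{M}_X$ and conclude via the Kodaira embedding theorem. Indeed, the existence of a closed complex one-dimensional submanifold $S$ whose homology class is Poincar\'e dual to $[\omega]$ is exactly the failure of obstruction $O(2)$, while the hypothesis that no representative of the Poincar\'e dual of the Chern class of the vortex line bundle contains a component of $S$ is exactly the failure of obstruction $O(1)$. Thus the hypotheses place us in the situation already analyzed in the Integrality and Projectivity subsections.

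First I would invoke Lemma \ref{hol} to obtain the holomorphic map $\Psi : \mathcal{M}_X \to \mathcal{M}_S$, available precisely because $O(2)$ is avoided, and then the subsequent lemma to identify the first Chern class of the pullback bundle $\Psi^{*}(\mathcal{L})$ with the class of $\tfrac{\Omega_X(\omega_1)}{4\pi^2 k}$. By the integrality discussion one first rescales $\omega$ to $\omega_1 = k\omega$ with $k = n\tfrac{4\pi}{\tau A}$ and $n \in \mathbb{Z}$, so that $\tfrac{\Omega_{S(A_{\omega_1})}}{4\pi^2}$ is integral on $\mathcal{M}_S$; it then descends to a holomorphic line bundle $\mathcal{L}$, and $\Psi^{*}(\mathcal{L})$ becomes a holomorphic line bundle on $\mathcal{M}_X$ whose first Chern class is a nonzero rational multiple of $[\Omega_X]$. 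Here the avoidance of $O(1)$ is what guarantees that $\Psi$ and the pullback are well defined on the whole regular moduli space, since no solution section vanishes identically on a component of $S$.

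Next I would check the hypotheses of the Kodaira embedding theorem for $\mathcal{M}_X$. By Proposition \ref{K} the regular part of the moduli space is K\"ahler with K\"ahler form $\Omega_X$, so under the smoothness hypothesis all of $\mathcal{M}_X$ is a compact K\"ahler manifold, compactness coming from Remark \ref{note} together with the Bradlow and Garcia-Prada identification with a Seiberg-Witten moduli space recalled in Appendix 1. Since the holomorphic line bundle produced above has first Chern class proportional to the integral class $[\Omega_X]$, the form $\Omega_X$ is (after scaling) a Hodge form and the bundle, or its dual, or a suitable positive power, is positive. The Kodaira embedding theorem then asserts that a compact K\"ahler manifold carrying a positive holomorphic line bundle is projective, giving the conclusion that $\mathcal{M}_X$ is projective.

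The hard part is the positivity and descent bookkeeping rather than any new geometry. One must check that the proportionality constant relating $c_1(\Psi^{*}\mathcal{L})$ to $[\Omega_X]$ can be arranged with the correct sign, so that after possibly replacing $\mathcal{L}$ by $\mathcal{L}^{-1}$, recalling that $-\mathcal{L}$ is the Quillen bundle, or by a positive power, the resulting bundle is genuinely positive; and one must confirm that the descent of the Quillen bundle from the configuration space to $\mathcal{M}_X$ indeed holds under the normalization $k = n\tfrac{4\pi}{\tau A}$. Both points are settled by the Integrality subsection: passing to $\omega_1 = k\omega$ forces the relevant class to be integral and hence the bundle to descend, exactly as in the Riemann surface case, so the remaining verification is routine and the corollary follows.
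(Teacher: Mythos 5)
Your proposal is correct and follows essentially the same route as the paper: you identify the two hypotheses with the avoidance of obstructions $O(2)$ and $O(1)$, then combine Lemma \ref{hol}, the integrality lemma identifying $c_1(\Psi^{*}\mathcal{L})$ with a rational multiple of $[\Omega_X]$, compactness (Remark \ref{note}), smoothness, and the K\"ahler structure from Proposition \ref{K} to conclude projectivity via Kodaira embedding, which is precisely the chain of reasoning the paper assembles in its Integrality and Projectivity subsections. The only cosmetic slip is attributing compactness to the Seiberg--Witten identification in Appendix 1, whereas Remark \ref{note} rests on the divisor-pair description $(L,[s])$; this does not affect the argument.
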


 \section{{\bf Appendix1 :  The Vortex  and  the Seiberg-Witten correspondence}}
In this section we briefly review the Seiberg-Witten equations for the  K$\ddot{\rm{a}}$hler surface and the analysis of these
equations in this  case. This section closely follows  Bradlow and Garcia-Prada,  \cite{BG} .

In \cite{BG}, Bradlow and Garcia-Prada   wrote the Seiberg-Witten equations  as
\begin{equation}
\bar{ \partial}_{\hat{A}} \phi + {\bar{\partial}_{\hat{A}}}^{*} \beta = 0
\end{equation}
\begin{equation}
\Lambda F_A = i( {\lvert \phi \rvert}^{2} - {\lvert \beta \rvert}^{2} )
\end{equation}
\begin{equation}
{F_A}^{2,0} = -\bar{\phi} \beta
\end{equation}
\begin{equation}
{F_A}^{0,2} = \bar{\beta} \phi
\end{equation}
where  notation is as in ~\cite{BG}, i.e. $(\phi, \beta)$ is a section of the $S_L^{+}$ bundle, $A$ is connection on $L$ and $\hat{A} $ is the induced connection on $\hat{L} $ and $\Lambda F_A$ is the contraction of the curvature with the K $\ddot{\rm{a}}$hler form $\omega$. 
It is not difficult to see (\cite{Wi}) that the solutions to these equations are such that
either $\beta= 0$ or $\phi = 0$, and it is not possible to have irreducible solutions of both types simultaneously for a fixed $Spin^{c}$ -structure. We thus
have one of the following two situations:

$(i)$ $ \beta = 0 $ and the equations reduce to

$F_A^{ 0,2} = 0$

$\overline{\partial_{\hat{A}}}\phi = 0$

$\Lambda F_A = i {\lvert \phi \rvert}^{2} $

and similar equations for 

$(ii)$ $\phi =0 $, i.e.

${F_A}^{0,2} = 0$

${\overline{\partial_{\hat{A}}}}^{*} \beta = 0$

$\Lambda{F_A} = i{\lvert \beta \rvert}^{2}.$

\begin{remark}. 
We have omitted the equation $F_A^{2,0} = 0$, since by unitarity of the connection this
is equivalent to $F_A^{0,2} = 0$.
\end{remark}
 The Hodge star operator
interchanges these two cases, and we can thus concentrate on case $\beta =0$.
Equations  are essentially the equations known as the vortex equations. These  have been extensively studied (e.g. in \cite{B1}, \cite{B2}, \cite{G2}, \cite{G3} ) for compact
K$\ddot{\rm{a}}$hler manifolds of arbitrary dimension. The equations are the following:
Let $(X, \omega)$ be a compact K$\ddot{{\rm{a}}}$hler manifold of arbitrary dimension, and let $(L, h)$ be a
Hermitian $C^{\infty}$ line bundle over $X$. Let $\tau \in R$. The $\tau$ -vortex equations

$F_A^{ 0,2} = 0$

$\overline{\partial_{\hat{A}}} \phi = 0$

$ \Lambda F_A = \frac{i}{2}({\lvert \phi \rvert}^{2} - \tau )$

are equations for a pair $(A, \phi)$ consisting of a connection on $(L, h)$ and a smooth section
of $L$. The first equation means that $A$ defines a holomorphic structure on $L$, while the
second says that $\phi$ must be holomorphic with respect to this holomorphic structure.

Let $s$ be  the scalar curvature of $X$, the Bradlow and Garcia-Prada ~\cite{BG} obtain that the Sieberg witten equations are equivalent to

$ {F_{\hat{A}}}^{0,2} = 0$

$ \overline{\partial_{\hat{A}}} \phi = 0$

$\Lambda F_{\hat{A}}=  \frac{i}{2} ({\lvert \phi  \rvert }^2 + s)$

These are the vortex equations on $\hat {L}$, but with the parameter $\tau$ replaced by minus
the scalar curvature. 
One can  perturb the above  equations by $-s+f$, when
$\beta = 0$, equations  reduce to the constant function vortex equations (see e.g.
\cite{G4}).

\section{{\bf Conclusion:}}

1. We have shown that for various vortex moduli spaces for compact  K$\ddot{\rm{a}}$hler  surfaces the quantum bundle is a holomorphic determinant line bundle.
If the compact K$\ddot{\rm{a}}$hler surface is projective, we have that the vortex moduli 
   spaces are projective, if they are smooth. There are obstructions to this result which we showed can be surmounted. Though the moduli space has been proven to be a Hilbert scheme in certain cases by Bradlow, we donot use the theory of Hilbert schemes.
   
   2. We also showed that on projective manifolds $(M, \omega)$, there are holomorphic determinant bundles (in the sense of Knusden-Mumford used by  Bismut, Gillet, Soul$\acute{\rm{e}}$) which play the role of 
the geometric quantum bundle, namely one for each  input data of a Hermitian  holomorphic line bundle $L$ of non-trivial Chern class on  a compact K$\ddot{\rm{a}}$hler manifold $Z$ (with Todd genus non-zero)  and a choice of a geometric quantization of $(M, \omega)$.

\section{ {\bf Acknowledgement:}}

We thank Professor Rukmini Dey for her patient guidance and help without which the manuscript would not have taken the present shape.

\end{document}